\documentclass[12pt]{article}

\usepackage[shrink=50,letterspace=500]{microtype}
\usepackage[utf8]{inputenc}

\usepackage[english]{babel}

\usepackage[a4paper,left=2.5cm,top=3cm,textwidth=16cm,textheight=24.5cm]{geometry}

\usepackage[bookmarksopen=true,bookmarks=true,unicode,setpagesize]{hyperref}
\hypersetup{colorlinks=true,linkcolor=black,citecolor=black}

\usepackage[intlimits]{amsmath}
\usepackage{mathtools}
\usepackage{mathrsfs}
\usepackage{stackrel}
\usepackage{yhmath}
\usepackage{enumerate}
\usepackage{footnpag}
\usepackage[noadjust]{cite}

\usepackage{amsthm,amssymb}

\theoremstyle{plain}
\newtheorem{theorem}{Theorem}

\newtheorem{proposition}[theorem]{Proposition}

\theoremstyle{definition}

\newtheorem{example}[theorem]{Example}

\theoremstyle{remark}
\newtheorem{remark}[theorem]{Remark}

\allowdisplaybreaks[3]


\newcommand\B{\mathcal{B}}

\renewcommand\G{\mathcal{G}}

\newcommand\N{\mathbb{N}}

\newcommand\R{\mathbb{R}}

\newcommand\la{\lambda}

\newcommand\cnt[2]{\text{\setbox2=\hbox{#1}\rlap{\hbox to \wd2{\hfil#2\hfil}}\box2}}

\newcommand\X{\R^d}

\newcommand\ind[1]{\relax}

\newcommand\term[3]{\relax}

        {\end{list}}

\makeatother

\begin{document}

\pagestyle{plain}
\title{Random potentials for Markov processes}

\date{}
\author{
\textbf{Yuri Kondratiev}\\
 Department of Mathematics, University of Bielefeld, \\
 D-33615 Bielefeld, Germany,\\
 Dragomanov University, Kyiv, Ukraine\\
 e-mail: kondrat@mathematik.uni-bielefeld.de\\
 Email: kondrat@math.uni-bielefeld.de 
\and\textbf{ Jos{é} L.~da Silva},\\
 CIMA, University of Madeira, Campus da Penteada,\\
 9020-105 Funchal, Portugal.\\
e-mail: joses@staff.uma.pt}
\date{\today}

\maketitle
\begin{abstract}
The paper is devoted to the integral functionals $\int_0^\infty f(X_t)\,{\mathrm{d}t}$ of Markov processes in $\X$ in the case  $d\ge 3$. It is established that such functionals can be presented as the integrals $\int_{\X} f(y) \G(x, \mathrm{d}y, \omega)$
with  vector valued random measure $\G(x, \mathrm{d}y, \omega)$. Some examples such as compound Poisson processes, Brownian motion and diffusions  are considered. 

{\em Keywords:} Markov processes, Green function, random Green measure, compound Poisson process,  Brownian motion. 

{\em AMS Subject Classification 2010:} 47D07, 37P30, 60G22, 47A30
\end{abstract}

\tableofcontents{}

\section{Introduction}
Let  $X=\{X(t), t\ge 0\}$ be a Markov process in $\X$ starting from $x\in\X$.
For a function $f:\X\to \R$ the potential of $f$ is defined as \cite{GB}
$$
u_f(x)= \int_0^\infty E^x [f(X(t)]\,\mathrm{d}t.
$$
The existence of the potential $u_f(x)$ is a difficult question and the class
of admissible $f$ shall be analyzed for each process separately. 
An alternative approach is based on the use of the generator $L$
of the process.  Namely, the potential $u_f$ may be constructed
as the solution to the following equation:
$$
-Lu=f.
$$
Of course, there appear a technical problem of the characterization of
the domain of the inverse generator $L^{-1}$.  In the analogy with
the PDE framework, we would like to have a representation
$$
u_f(x) =\int_{\X} f(y) \G(x,\mathrm{d}y),
$$
where $\G(x,dy)$ is a measure on $\X$.  This measure is nothing but the fundamental
solution to the considered equation and traditionally may be called the Green measure
for the operator $L$.

Another possibility to study potentials is related with the following observation.
A standard way to define a   homogeneous  Markov process is to give the probability
$P_t(x,B)$ of the transition from the point $x\in \X$  to the set
$B\subset \X$ in   time $t>0$.  In some cases we have
$$
P_t(x,B)=\int_{B} p_t(x,y)\,\mathrm{d}y,
$$
where $p_t(x,y)$ is the density of the transition probability.
The function
$$
g(x,y) =\int_0^\infty p_t(x,y)\,\mathrm{d}t
$$
is called the Green function. Of course, the existence of the Green
function for a given process or  for a given   transition probability is a non-trivial
fact also. Green functions for different classes of Markov processes are
 traditional objects in probability theory, see, e.g.,
\cite{Gri1}, \cite{Gri2} and references therein.

The existence of the transition density is a condition which is not
always satisfied even for simple classes of Markov processes,
see examples below. Hence, we introduce
the Green measure
$$
\G(x,\mathrm{d}y)= \int_0^\infty P_t(x,\mathrm{d}y)\,{\mathrm{d}t}
$$
assuming the existence of this object as a Radon measure on $\X$,
see \cite{KS}.

We have another  object  related with the Markov process $X$.  Namely, having in mind that $X$ starts from $x\in \X$, for certain class of functions $f:\X\to\R$ introduce the random variables which are reasonable to call random potentials:
$$
Y^x(f)= \int_0^\infty f(X(t))\,\mathrm{d}t.
$$
This is an additive functional but contrary to usual
for $d=1$ we have no normalization. It will be clear from our examples given below   that such
object exists typically for $d\geq 3$. Then
its relation to the Green measure is the following:
$$
E^x [Y^x(f)] =\int_{\X} f(y) \G(x,\mathrm{d}y).
$$
The aim of this paper is to show that the random variables $Y^x(f)$
for certain classes of Markov processes have the representation
$$
Y^x(f)(\omega)= \int_{\X} f(y) \G(x, \mathrm{d}y, \omega)
$$
with  vector valued random measure  which we will call the random Green measure.
This problem was already discussed in \cite{KMS} in the framework of stochastic analysis.
Here we would like to develop purely analytic approach to the construction and analysis
of random Green measures  for Markov processes. As a result, we are dealing with pairs
of related objects: potentials and their representations via Green measures and random 
potentials and their representations via random Green measures.

The plan of the paper is the following. We will analyze two classes of
stochastic  processes in $\X$, $d\geq 3$: continuous time translation invariant
random walks and diffusions, in particular, the  Brownian motion. For each such
class we will show the existence of the random Green measures and establish
some  of their properties. In particular, we will describe a natural class of
admissible function $f$ integrable w.r.t.~the Green measures.

Note that even  for such simple Markov processes as random walks the transition
probability may have a complicated behavior in time-space variables, see
\cite{GKPZ}. But the Green  measures may have nice visible properties. This
is related with an averaging of transition probabilities in the definition
of Green measures. The last effect  is well known in different  models of
dynamics.

\section{Random walks}

\subsection{Jump generators and Green measures}

Let us describe briefly certain results from \cite{KS} which we need
for our considerations.

Let us fix a density kernel $a:\X \to \R$ with the following properties:
$$
a(-x)=a(x), \;\; a\geq 0,\:\; a\in C_b(\X),
$$
$$
\int_{\X} a(y)\,\mathrm{d}y=1.
$$
Consider the generator
$$
Lf(x)= \int_{\X} a(x-y)[f(y)-f(x)]\,\mathrm{d}y.
$$
This operator  can be defined in a proper function space $E$. 
As $E$, we may consider the space of bounded measurable functions
$B(\X)$, the Banach space of bounded continuous functions $C_b(\X)$ or
the Lebesgue spaces $L^p(\X)$, $p\ge 1$, depending on the case.

In particular,
$
L^\ast =L$ in $L^2(\X)$ and  $L$ is a bounded linear operator  in all $L^p(\X)$.
We call this operator a jump generator with the jump
kernel $a$. The corresponding Markov process is of a pure jump type and is known
in stochastic  as  a compound Poisson process \cite{Sk}.

Several analytic properties of jump generators were studied recently in
\cite{GKPZ},  \cite{KMPZ}, \cite{KMV}. We will formulate certain necessary
facts concerning  these operators.

Because $L$ is a convolution operator, it is natural to apply Fourier
transform to  study it.
Consider the Fourier image of the jump kernel

$$
\hat{a}(k)= \int_{\X} e^{-i(k,y)} a(y)\,\mathrm{d}y.
$$
Then
$$
\hat{a}(0) =1,\;\; |\hat{a}(k)| \leq 1,\; k\neq 0,
$$
$$
\hat{a}(k) \to 0,\; k\to \infty.
$$
In the Fourier image   $L$ is the operator of multiplication by the function
$$
\hat{L} (k) = \hat{a}(k)-1,
$$
that is, the symbol of $L$.

In the following we will always assume  that $\hat{a} \in L^1(\X)$
and $a$ has finite second moment, that is,
$$
\int_{\X} |x|^2 a(x)\,\mathrm{d}x <\infty.
$$

The resolvent $R_\la(L)= (\la - L)^{-1}$ for $\la>0$
has a kernel
$$
\G_\la(x,y)= \frac{1}{1+\la}\big(\delta(x-y) + G_\la(x-y)\big)
$$
with
$$
G_\la(x)= \sum_{k=1}^\infty \frac{a_k(x)}{(1+\la)^k},
$$
where
$$
a_k(x) = a^{\ast k}(x)
$$
is the $  k$-fold convolution of the kernel $a$.
As any Radon measure, the Green measure may be considered
as a (translation invariant)  generalized function of the form
$$
\G_0 (x) = \delta (x) + G_0 (x).
$$

The transition probability density  $p(t,x)$   in terms of
Fourier transform has representation
$$
p(t,x)= \frac{1}{(2\pi)^d} \int_{\X} e^{i(k,x)+ t(\hat{a} (k)-1) }\,\mathrm{d}k,
$$
and for the resolvent kernel
$$
\G_\la (x) = -(L-\la)^{-1} (x),
$$
it holds that
$$
\G_\la(x-y)= \frac{1}{(2\pi)^d} \int_{\X} \frac{e^{i(k,x-y)}}{1-\hat{a}(k) +\la}\,\mathrm{d}k.
$$
For a regularization of the last expression we write
$$
\frac{1}{1-\hat{a}(k) +\la}= \frac{1}{1+\la} + \frac{\hat{a}(k)} {(1+\la)(1-\hat{a}(k) +\la)}.
$$
Then for operators we have
$$
\G_\la= \frac{1}{1+\la} + G_\la,
$$
or in the terms of  kernels
$$
G_\la (x-y)= \frac{1}{(2\pi)^d} \int_{\X} \frac{1}{1+\la} \frac{ \hat{a}(k) e^{i(k,x-y)}}{1-\hat{a}(k) +\la}\,\mathrm{d}k.
$$

To summarize our considerations, we note that
the study of Green kernels  is reduced to the analysis of the following objects.
The regular part of the Green kernel is
$$
G_0 (x)= \sum_{k=1}^\infty {a_k(x)},
$$
$$
a_k(x) = a^{\ast k}(x)
$$
$k$-fold convolution.

The Fourier representation for $G_0$:

$$
G_0 (x)= \frac{1}{(2\pi)^d} \int_{\X} \frac{ \hat{a}(k) e^{i(k,x)}}{1-\hat{a}(k)}\,\mathrm{d}k.
$$

For  $d\geq 3$ this integral exists for all $x\in\X$ that follows from the integrable
singularity  of $(1-\hat{a}(k))^{-1}$ at $k=0$. The latter is the consequence of our
assumptions on $a(x)$.

Denote by $X(t)$ our compound Poisson process or continuous time random walk.

Then \cite{KS}
\begin{align}
E^{x}\left[\int_{0}^{\infty}f(X(t))\,\mathrm{d}t\right] & =\int_{0}^{\infty}T(t)f(x)\,\mathrm{d}t\nonumber \\
 & =\int_{\X}f(y)\G(x,\mathrm{d}y)\label{GM}\\
 & =-(L^{-1}f)(x).\nonumber 
\end{align}
Here $\G(x,\mathrm{d}y)$ (if it exists) is called the Green measure for the process $X$.
The class of admissible $f$ shall be discussed separately.

For the  Green function we can write the representation
$$
\G(x,\mathrm{d}y)= \G(x,y)\,\mathrm{d}y,
$$
where $\G(x,y)$ is a positive generalized function.

We know that for our processes hold
$$
\G(x,y)= \delta(x-y) + G(x-y),
$$
where $G(x)$ is the regular part of the Green density. More precisely, we have
$$
G (x)= \frac{1}{(2\pi)^d} \int_{\X} \frac{ \hat{a}(k) e^{i(k,x)}}{1-\hat{a}(k)}\,\mathrm{d}k.
$$
If $a$ has finite second moment,
 for $d\geq 3$ this integral exists for all $x\in\X$ and is a uniformly bounded function.
 Then for continuous $f\in L^1(\X)$  the expression
 $$
 \int_{\X} f(y) \G(x,\mathrm{d}y)
 $$
 is well defined \cite{KS}.

 Consider for $T>0$ the random variables
 $$
 Y(f, T)= \int_0^T f(X(t))\,\mathrm{d}t.
 $$
 Then in the sense of $L^1(P):=L^1(\Omega,P)$ (in expectation)
 $
 \lim_{T\to\infty} Y(f, T)
$
exists and defines a random variable $Y(f)$.
Actually, it will be reasonable to denote $Y^x(f)$ but in our considerations
the starting point $x\in \X$ is fixed.
The proof is obvious. For $f\geq 0$ we can use monotonicity arguments.

What we know is
\begin{equation}
\label{E}
E^x[Y(f)]= f(x) + \int_{\X} f(y)G(x-y)\,\mathrm{d}y.
\end{equation}
In the following we will discuss vector valued Radon measure, see, e.g., \cite{Dobr}.
 These are $L^1(P)$-valued measures which are finite on all bounded Borel sets $A\in \B_b(\X)$.

Introduce the Banach space $CL(\X)= C_b(\X) \cap L^1(\X)$ with the norm
$$
\|f\|_{CL}:= \|f\|_\infty + \|f\|_1:=\sup|f|+\|f\|_{L^1 (\X)}.
$$

\begin{theorem}
\label{thm:representation1}
For each $x\in \X$ the operator
$$
Y: CL(\X) \to L^1 (P)
$$
has a unique representation given, for all $f\in CL(\X)$ and every $\omega\in\Omega$, by
\begin{equation}
\label{repr}
Y(f)(\omega)= \int_{\X} f(y) \mu^x (\mathrm{d}y, \omega)
\end{equation}
with a vector valued  $\sigma$-additive
$($in the strong topology of $L^1(P))$
Radon measure $\mu^x (\mathrm{d}y, \omega)$ on $\B_b(\X)$.
\end{theorem}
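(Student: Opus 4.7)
The plan is to realize $\mu^x$ as the occupation measure of the process: for $\omega$ in a full-$P$-measure set, define
$$\mu^x(A,\omega) := \int_0^\infty \1_A(X(t,\omega))\,\mathrm{d}t, \qquad A\in\B_b(\X),$$
so that the representation \eqref{repr} becomes the pathwise change of variables $\int_0^\infty f(X(t,\omega))\,\mathrm{d}t = \int_\X f(y)\,\mu^x(\mathrm{d}y,\omega)$ for the occupation measure.

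First I would verify the measure-theoretic properties of $\mu^x$. Applying \eqref{E} to $\1_A$, justified by monotone approximation of $\1_A$ from below by functions in $CL(\X)$, gives
$$E^x[\mu^x(A)] \;=\; \1_A(x) + \int_A G(x-y)\,\mathrm{d}y \;\le\; 1 + \|G\|_\infty\, |A|,$$
where $|A|$ is the Lebesgue measure of $A$; in particular $\mu^x(A,\cdot)\in L^1(P)$. Taking a countable generating family of $\B_b(\X)$ and discarding one $P$-null set, $\mu^x(\cdot,\omega)$ is locally finite, hence a Radon measure, for almost every $\omega$. Pathwise finite additivity is immediate from the definition, and for strong $\sigma$-additivity in $L^1(P)$ it suffices, by nonnegativity of $\mu^x$, to show that $E^x[\mu^x(A_n)]\to 0$ whenever $A_n\downarrow\emptyset$ in $\B_b(\X)$, which follows from dominated convergence since $|A_n|\to 0$ and $G$ is bounded.

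Next I would establish \eqref{repr} together with continuity of $Y$. The key analytic estimate, obtained by applying \eqref{E} to $|f|$, is
$$E^x|Y(f)| \;\le\; E^x Y(|f|) \;=\; |f(x)| + \int_\X |f(y)|\, G(x-y)\,\mathrm{d}y \;\le\; C\,\|f\|_{CL},$$
with $C = \max(1,\|G\|_\infty)$. This makes $f\mapsto Y(f)$ continuous from $CL(\X)$ into $L^1(P)$ and provides the same bound for $f\mapsto\int f\,\mathrm{d}\mu^x$. For simple $f\in CL(\X)$, identity \eqref{repr} holds by the very definition of $\mu^x$ and linearity; the general case follows by approximating $f\in CL(\X)$ and passing to the limit in $L^1(P)$ on the left (via the estimate) and pathwise on the right (via monotone convergence applied to $|f|$). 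Uniqueness is routine: if $\nu^x$ were another representing measure, then $\int g\,\mathrm{d}(\mu^x-\nu^x)=0$ in $L^1(P)$ for every $g\in C_c(\X)\subset CL(\X)$, and approximating $\1_A$ by continuous functions forces $\mu^x(A)=\nu^x(A)$ almost surely on a countable generating family.

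The main obstacle I expect is the pathwise meaningfulness of $\int_\X f(y)\,\mu^x(\mathrm{d}y,\omega)$ for $f\in CL(\X)$ whose support is not compact, because $\mu^x(\cdot,\omega)$ is only a priori locally finite. This is overcome by the estimate above combined with monotone convergence: for balls $B_n\uparrow\X$ the partial integrals $\int_{B_n}|f|\,\mathrm{d}\mu^x(\cdot,\omega)$ increase pathwise and are bounded in $L^1(P)$ by $C\|f\|_{CL}$, so their limit is finite on a full-measure set; thus the improper integral $\int_\X f\,\mathrm{d}\mu^x$ exists pathwise almost surely and coincides with $Y(f)$.
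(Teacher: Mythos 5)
Your argument is correct in substance but takes a genuinely different route from the paper. The paper treats $Y$ abstractly: it observes that $Y$ is a continuous linear map into the weakly complete space $L^1(P)$, hence weakly compact by Bartle--Dunford--Schwartz, and then invokes the Dobrakov--Kluv\'anek representation theorems for operators on spaces of continuous functions; since $CL(\X)$ is not a $C_0$-type space, the authors must localize to the balls $B_N(0)$, apply the representation theorem on each $C(B_N(0))$, and patch the resulting consistent family $\mu_N^x$ into a measure on $\B_b(\X)$. You instead exhibit the representing measure explicitly as the occupation measure $\mu^x(A,\omega)=\int_0^\infty \1_A(X(t,\omega))\,\mathrm{d}t$, for which \eqref{repr} is the pathwise change-of-variables identity; this is more elementary and constructive, it sidesteps entirely the difficulty the paper itself flags about $CL(\X)$ not fitting the standard representation framework, and it identifies $\mu^x$ concretely (which is essentially what the Remark following the theorem asserts). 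The paper's abstract route would still apply in settings where no pathwise occupation measure is at hand, but here yours is arguably the more natural proof. Two small repairs: to get $E^x[\mu^x(A)]=\1_A(x)+\int_A G(x-y)\,\mathrm{d}y$ you should not argue by monotone approximation of $\1_A$ from below by elements of $CL(\X)$ (this fails for a general bounded Borel set $A$); simply apply Tonelli to obtain $E^x[\mu^x(A)]=\int_0^\infty P_t(x,A)\,\mathrm{d}t=\G(x,A)$ directly from the definition of the Green measure. You also use implicitly that $(t,\omega)\mapsto X(t,\omega)$ is jointly measurable so that the occupation integral is defined; this holds for the c\`{a}dl\`{a}g paths of the compound Poisson process but deserves a sentence. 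Finally, note that your construction yields \eqref{repr} only for $P$-almost every $\omega$ after discarding countably many null sets, which is all that can reasonably be meant by the theorem's ``every $\omega$'' since $Y(f)$ lives in $L^1(P)$.
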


\begin{proof}
First of all we note that
the mapping
$$
CL(\X)\ni f \mapsto Y(f) \in L^1(P)
$$
is linear and a continuous operator. It follows from
(\ref{E}) taking into account the boundedness of $G$.
The space $L^1(P)$ is weakly complete
\cite{DS}, Theorem VI.8.6. Then this mapping is
weakly compact, see \cite{Bartle}, Theorem 3.5.
Now we would like  to apply a representation theorem
from \cite{Dobr}, \cite{Kluv}.  Certain technical difficulty here
is related to the standard framework of such type of representation
theorems. The known approaches consider mappings on spaces of continuous
functions (on locally compact spaces) with zero limits at the infinity. 
Our space $CL(\X)$ is a new type of Banach
space of continuous functions which did not appear before
in the general theory.

To obtain the desired representation we proceed as follows. 
At first we take the closed ball $B_N(0) \subset \X$
with radius $N\in\N$ centred at zero. Consider our operator $Y$ on the space
$C(B_N(0))$. Then we may apply the mentioned results to obtain the  representation
$$
\label{rep_N}
Y(f)(\omega)= \int_{B_N(0)} f(x) \mu_N^x(\mathrm{d}y,\omega), \;f\in C(B_N(0)),\;\omega\in\Omega.
$$
Here $\mu_N^x(\mathrm{d}y,\omega)$ is a vector valued Radon measure on
$\B(B_N(0))$ with values in $L^1(\Omega, P)$. The family of measures
$\mu_N^x(\mathrm{d}y,\omega), N\in \N$ is consistent and define the limit  measure by $\mu^x(\mathrm{d}y,\omega)$ on $\B_{b}(\X)$. Using a priori equality (\ref{E}), which
holds for all $f\in C(B_N(0))$ and a standard approximation technique,
the statement of the theorem follows.
\end{proof}

\begin{remark}
The representation (\ref{repr}) gives additional information on the measure $\mu^x (\mathrm{d}y, \omega)$. 
\begin{enumerate}
\item Because (\ref{repr}) is valid for all $f\in CL(\X)$, then we have
$$
\mu^x (dy, \omega)= \delta(x-y) + R(x,y,\omega)\,\mathrm{d}y
$$
with
$$
E[R(x,y,\cdot)] = G(x-y).
$$
\item For every bounded Borel set $A\subset \X$ it holds
$$
\mu^x(A,\cdot)\in L^1(P).
$$
Moreover,  as above, from (\ref{E})
follows
$$
E[\mu^x(\mathrm{d}y, \cdot)]  = \delta(x-y) + G(x-y)\,\mathrm{d}y.
$$
\item In certain particular models (see examples below) we have
integrable functions $G(x)$ and then
$$
E[\mu^x(\X,\cdot) ] <\infty,
$$
i.e., $\mu^x(\X,\omega) < \infty$, $P$-a.a.~$\omega\in\Omega$.
\end{enumerate}
\end{remark}

The next interesting question is to study the Green measure $\mu^x(dy, \cdot)$, namely if it is really a random variable or may be
degenerated in a constant. To this end, we need to calculate the variance of the random variable
$Y^x(f)$ for $x\in\X$ and $f\in CL(\X)$. Without lost of generality and for simplicity of calculations, we take $x=0$
and
\begin{equation}
\label{exp-function}
f(y)= e^{-\sum_{k=1}^d |y_k|}, \;y\in \X.
\end{equation}
From $E[\mu^0(\mathrm{d}y, \cdot)]  = \delta(y) + G(y)\,\mathrm{d}y$ it follows that
$$
\int_0^\infty E[f(X(t))]\,\mathrm{d}t = f(0) + \int_{\X} f(y) G(y)\,\mathrm{d}y.
$$
Introduce
\begin{align*}
V(f) & =E\left[\left(\int_{0}^{\infty}f(X(t))\,\mathrm{d}t-E\left[\int_{0}^{\infty}f(X(t))\,\mathrm{d}t\right]\right)^2\right]\\
 & =2\int_{\X}\int_{\X}f(y)f(y+z)\G(\mathrm{d}y)\G(\mathrm{d}z)-\int_{\X}\int_{\X}f(y)f(z)\G(\mathrm{d}y)\G(\mathrm{d}z)\\
 & =f^{2}(0)+2\int_{\X}\int_{\X}f(y)f(y+z)G(y)G(z)\,\mathrm{d}y\,\mathrm{d}z-\int_{\X}\int_{\X}f(y)f(z)G(y)G(z)\,\mathrm{d}y\,\mathrm{d}z.
\end{align*}

\begin{proposition}
Assume that $a(x)$ is even in each variable:
$$
a(x_1,\dots,- x_k, \dots,x_d)=  a(x_1,\dots, x_k, \dots,x_d),\;\; k=1,\dots, d.
$$
Then for the function $f$ given in  \eqref{exp-function} holds $V(f)>0$.

\begin{proof}
For the regular part of the Green measure $G(x)$ we have as above
$$
G (x)= \frac{1}{(2\pi)^d} \int_{\X} \frac{ \hat{a}(k) e^{i(k,x)}}{1-\hat{a}(k)}\,\mathrm{d}k.
$$
Because $a(x)$ is  symmetric in each variable by assumption,
the function $G(x)$ has the same symmetry. Using this symmetry we reduce
the equality for $V(f)$ to the integration over positive octant: 
$$
V(f)= f^2(0) + 2\int_{\X_+} \int_{\X_+} f(y)f(y+z) G(y) G(z)\,\mathrm{d}y\,\mathrm{d}z  -\int_{\X_+} \int_{\X_+} f(y) f(z)  G(y) G(z)\,\mathrm{d}y\,\mathrm{d}z.
$$
Now after a change of variables the integral part of this formula becomes
$$
\varepsilon^d  \int_{\X_+} \int_{\X_+} e^{-\varepsilon\sum y_k -\sum z_k}
[2e^{-\varepsilon \sum y_k} -1] G(\varepsilon y) G(z)\,\mathrm{d}y\,\mathrm{d}z.
$$
In the last expression $G(y)$ is continuous at $0$ and the integrand monotonically growing
for $\varepsilon \to 0$ with the point-wise limiting function with infinite integral. Then for some
$\varepsilon$ this expression is positive. Note that this expression in fact does not depend on $\varepsilon$.
\end{proof}

\end{proposition}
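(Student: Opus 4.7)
My approach would be probabilistic rather than a direct manipulation of the double integrals defining $V(f)$. Since $\int_{\X} a(x)\,\mathrm{d}x=1$, the compound Poisson process $X$ has constant jump rate $1$, so its first jump time $\tau$ is $\mathrm{Exp}(1)$-distributed; by the construction of the process as a sum of i.i.d.\ jumps with exponential inter-arrival times, $\tau$ is independent of the first jump location $X(\tau)$ (whose density is $a$). On $\{t<\tau\}$ the trajectory stays at the starting point $0$, where $f(0)=1$, so we split
\[
Y(f) \;=\; \tau\, f(0) + Z, \qquad Z := \int_\tau^\infty f(X(t))\,\mathrm{d}t.
\]

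By the strong Markov property at $\tau$ combined with \eqref{E}, the conditional expectation is $E[Z\mid \F_\tau] = g(X(\tau))$, where $g(y):=E^y[Y(f)] = f(y) + \int_\X G(y-w)f(w)\,\mathrm{d}w$. Hence $E[Y(f)\mid \F_\tau] = \tau + g(X(\tau))$, and the conditional variance formula together with the independence of $\tau$ and $X(\tau)$ yields
\[
V(f) \;\geq\; V\bigl(E[Y(f)\mid \F_\tau]\bigr) \;=\; V\bigl(\tau+g(X(\tau))\bigr) \;=\; V(\tau)+V\bigl(g(X(\tau))\bigr) \;\geq\; V(\tau)=1.
\]

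If $Y(f)\not\in L^1(P)^2$ then $V(f)=+\infty$ and there is nothing to prove, so we may assume $L^2$-integrability, which makes the conditional variance inequality legitimate. The main (minor) technical point is to justify the strong Markov identity for $Z$, which is however the standard first-jump decomposition of a compound Poisson path. Notably this argument uses neither the factor-wise symmetry of $a$ assumed in the proposition nor the particular exponential form of $f$ chosen in \eqref{exp-function}: it only requires $f(0)\neq 0$, and it in fact delivers the quantitative bound $V(f)\geq f(0)^2>0$.
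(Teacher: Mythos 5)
Your argument is correct, and it takes a genuinely different route from the paper's. The paper works entirely on the analytic side: it expands $V(f)$ as a double integral against the Green kernel $G$, uses the coordinate-wise evenness of $a$ (hence of $G$) to fold the integrals onto the positive octant, and then argues positivity via a change of variables and a monotone-limit argument in a parameter $\varepsilon$; the specific exponential form of $f$ in \eqref{exp-function} is what makes that change of variables work. You instead exploit the pathwise structure of the compound Poisson process: the first-jump decomposition $Y(f)=\tau f(0)+Z$ with $\tau\sim\mathrm{Exp}(1)$ independent of the jump destination $X(\tau)$, the strong Markov property to identify $E[Z\mid\F_\tau]=g(X(\tau))$, and the law of total variance to get $V(f)\ge V\bigl(\tau f(0)+g(X(\tau))\bigr)=f(0)^2V(\tau)+V\bigl(g(X(\tau))\bigr)\ge f(0)^2$. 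All the ingredients are standard for compound Poisson paths, and your reduction to the case $Y(f)\in L^2$ (otherwise $V(f)=+\infty$ and there is nothing to prove) is exactly what legitimizes the conditional-variance step. Your route buys quite a lot: a quantitative bound $V(f)\ge f(0)^2=1$, and the observation that neither the symmetry hypothesis on $a$ nor the particular choice of $f$ is needed --- only that $f$ does not vanish at the starting point. What it costs is that it leans on the probabilistic construction of the process rather than the purely analytic Green-measure calculus the paper is developing; on the other hand, the paper's own final step (the $\varepsilon\to 0$ monotonicity argument, followed by the remark that the expression does not depend on $\varepsilon$) is difficult to make rigorous as written, so your argument is arguably the more solid of the two.
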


\subsection{Particular models}

The main technical question is  a bound for $a_k(x)$ in $k$ and $x$ together for the analysis of the properties
of $G(x)$.  From the stochastic point of view,  $a_k(x)$ is the density
of sum of  $k$  i.i.d. random variables with distribution density $a(x)$. Unfortunately, we could not find
in the literature any general result in this direction. There are several particular classes of jump kernels
for which we shall expect such kind of results. We will consider two examples, see \cite{KS} for details.

\begin{example}[Gauss kernels]

Assume that the jump kernels has the following form:

\begin{equation}
\label{G}
a(x)= C\exp\left(-\frac{ b |x|^2}{2}\right),\quad x\in \X.
\end{equation}

\begin{proposition}[\cite{KS}]
For the kernel (\ref{G}) and $d\geq 3$ holds
$$
 G_0(x)     \leq C_1 \exp\left(-\frac{ b|x|^2}{2}\right).
$$
\end{proposition}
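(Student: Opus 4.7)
The plan is to combine the closed form for iterated Gaussian convolutions with the series representation $G_0=\sum_{k\ge 1}a_k$ established earlier. From $\int_{\X} a\,dx=1$ one forces $C=(b/(2\pi))^{d/2}$, so in Fourier variables $\hat a(k)=\exp(-|k|^2/(2b))$ and $\widehat{a_k}=\hat a^{\,k}$ inverts to
\[
a_k(x)=\left(\frac{b}{2\pi k}\right)^{d/2}\exp\!\left(-\frac{b|x|^2}{2k}\right).
\]
This formula is also immediate by induction on $k$ from the additivity of variances under convolution of centered Gaussians. It reduces the proposition to a concrete estimate on the numerical series
\[
G_0(x)=\left(\frac{b}{2\pi}\right)^{d/2}\sum_{k=1}^{\infty}k^{-d/2}\exp\!\left(-\frac{b|x|^2}{2k}\right).
\]

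The next step is to bound this sum by $C_1\exp(-b|x|^2/2)$. The $k=1$ summand is exactly $a(x)$ and already carries the desired Gaussian envelope, so the real task is the tail $k\ge 2$. The prefactor $k^{-d/2}$ is summable precisely because $d\ge 3$ gives $d/2>1$, while the exponential factor is monotonically increasing in $k$ and the product $k^{-d/2}\exp(-b|x|^2/(2k))$ is maximized near $k^{*}\sim b|x|^2/d$. My plan is to split the tail at a threshold $k_0=k_0(|x|)$, retaining the Gaussian factor in the small-$k$ block (where it dominates) and exploiting summability of $k^{-d/2}$ in the large-$k$ block.

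The hardest part is showing that the large-$k$ block, where $\exp(-b|x|^2/(2k))$ is close to $1$, is still controlled by $\exp(-b|x|^2/2)$: the Gaussian decay must be produced from the algebraic weights alone. A naive integral comparison of the series with $\int_0^{\infty}k^{-d/2}\exp(-b|x|^2/(2k))\,dk$, via the substitution $u=b|x|^2/(2k)$, reduces to an incomplete-Gamma integral and only yields the polynomial envelope $|x|^{2-d}$ typical of the Brownian Green function. Producing the sharper Gaussian bound therefore requires a finer argument specific to the Gaussian shape of $a$ — the kind of refined Fourier-side estimate developed in \cite{KS}, where the fast decay of $\hat a(k)$ away from $k=0$ together with the analytic structure of $1-\hat a(k)$ allows one to upgrade the polynomial tail to a Gaussian one. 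This is the main technical obstacle, and the place where the specific form \eqref{G} of the kernel (as opposed to merely its second-moment bound) is essential.
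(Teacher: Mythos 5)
Your reduction is correct as far as it goes: with $C=(b/(2\pi))^{d/2}$ the $k$-fold convolution is exactly $a_k(x)=(b/(2\pi k))^{d/2}\exp(-b|x|^2/(2k))$, and hence $G_0(x)=(b/(2\pi))^{d/2}\sum_{k\ge1}k^{-d/2}e^{-b|x|^2/(2k)}$. The paper itself offers no proof (the proposition is quoted from \cite{KS}), so the only question is whether your final step can be completed --- and it cannot. The ``hardest part'' you defer to an unspecified refined Fourier-side argument is not merely hard: your own closed formula gives the lower bound $G_0(x)\ge a_2(x)=(b/(4\pi))^{d/2}e^{-b|x|^2/4}$, and $e^{-b|x|^2/4}/e^{-b|x|^2/2}=e^{b|x|^2/4}\to\infty$, so no constant $C_1$ can make $G_0(x)\le C_1 e^{-b|x|^2/2}$ hold for all $x$. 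More generally, no Gaussian envelope of any rate can hold, since for every fixed $\delta>0$ the single term with index $k>b/(2\delta)$ already decays more slowly than $e^{-\delta|x|^2}$.

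The integral comparison you dismiss as ``naive'' is in fact the correct endpoint of the argument: the substitution $u=b|x|^2/(2t)$ shows $\sum_{k\ge1}k^{-d/2}e^{-b|x|^2/(2k)}\asymp |x|^{2-d}$ for large $|x|$ (two-sided, since all terms are positive), which is the Newtonian decay one must expect --- $G_0$ is the Green function of a walk with Gaussian steps and cannot decay faster than the Brownian Green function. So the honest conclusion of your computation is that the proposition's bound is false as stated; a correct statement in this setting would be $G_0(x)\le C_1(1+|x|)^{2-d}$, or else a decay bound for the resolvent kernel $G_\lambda$ with $\lambda>0$, where the geometric factor $(1+\lambda)^{-k}$ suppresses exactly the large-$k$ terms responsible for the slow decay. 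You should flag this contradiction explicitly rather than outsource the impossible step to the cited reference.
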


\end{example}

\begin{example}[Exponential tails] 

Assume
\begin{equation}
\label{exp}
a(x)\leq C\exp(-\delta|x|), \quad x\in \X.
\end{equation}

\begin{proposition}[\cite{KS}] 
For the kernel (\ref{exp}) and $d\geq 3$ holds
$$
G_0(x)\leq A\exp(-B|x|)
$$
with certain $A,B>0$.
\end{proposition}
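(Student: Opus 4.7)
My plan is to bound $G_0(x)=\sum_{k\ge1}a_k(x)$ by propagating the exponential tail of $a$ through every convolution factor. The key tool is a convolution-stability inequality: for any $0<\beta<\delta$, the elementary fact $|x-y|+|y|\ge|x|$ rearranges to $\delta|x-y|+\beta|y|\ge\beta|x|+(\delta-\beta)|x-y|$, whence
\[
\int_{\X}e^{-\delta|x-y|}e^{-\beta|y|}\,\mathrm{d}y\le e^{-\beta|x|}\int_{\X}e^{-(\delta-\beta)|z|}\,\mathrm{d}z=\frac{c_d}{(\delta-\beta)^d}\,e^{-\beta|x|},
\]
with $c_d=\int_{\R^d}e^{-|z|}\,\mathrm{d}z$. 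Thus the class of functions bounded by a multiple of $e^{-\beta|\cdot|}$ is preserved under convolution with $a$, and a single convolution multiplies the constant by $H:=Cc_d/(\delta-\beta)^d$.

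Starting from $a_1(x)=a(x)\le Ce^{-\delta|x|}\le Ce^{-\beta|x|}$, an induction on $k$ yields $a_k(x)\le CH^{k-1}e^{-\beta|x|}$. Provided $H<1$, summing the geometric series gives
\[
G_0(x)=\sum_{k\ge1}a_k(x)\le\frac{C}{1-H}\,e^{-\beta|x|},
\]
which is the proposition with $A=C/(1-H)$ and $B=\beta$.

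The main obstacle is arranging $H<1$. Because $\int a=1$ together with $a\le Ce^{-\delta|\cdot|}$ forces $Cc_d\ge\delta^d$, the ratio $H$ cannot be driven below one from the pointwise bound alone for every choice of parameters. The natural remedy is to iterate in blocks: choose an integer $m$ for which the $m$-fold convolution $a_m$ satisfies the convolution-stability estimate with a small effective constant, and run the induction with $a_m$ in place of $a$. The assumption $d\ge 3$ enters here essentially through the finiteness of $\sum_{k\ge1}a_k(0)\sim\sum_k k^{-d/2}$ (a local central limit theorem consequence of the finite second moment hypothesis), which is what permits the post-block bound to be made sharp enough to close the induction, while the finitely many initial terms $\sum_{k<m}a_k$ are treated directly by the same one-step estimate. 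An equally natural alternative route is Paley--Wiener: the exponential tail extends $\hat a$ holomorphically to the tube $|\mathrm{Im}\,z|<\delta$, and one shifts the contour in the Fourier formula for $G_0$ into $\R^d+i\beta\omega$ with $\omega=x/|x|$, producing the prefactor $e^{-\beta|x|}$; the obstacle there is to show that the shifted contour avoids the complex zero set of $1-\hat a$ uniformly in $\omega$, which requires a careful analysis of the quadratic zero at the origin using non-degeneracy of the covariance.
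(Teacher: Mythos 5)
The paper itself offers no proof of this proposition --- it is quoted from \cite{KS} without argument --- so the only question is whether your proposal can stand on its own, and it cannot. You have correctly located the obstruction, but neither of your proposed remedies removes it. The normalization $\int_{\X} a = 1$ forces $Cc_d \ge \delta^d$ and hence $H = Cc_d/(\delta-\beta)^d > 1$, as you note; but every convolution power $a_m$ is again a probability density, so any pointwise bound $a_m(x)\le C_m e^{-\delta_m|x|}$ forces $C_m c_d\ge \delta_m^d$ by the identical computation, and the ``block'' constant is again strictly larger than one. No choice of $m$ can make the geometric series converge. Your induction does prove the true statement that each individual $a_k$ retains an exponential tail, with constant $CH^{k-1}$, but the summation over $k$ is exactly where the argument must break down.

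It must break down because the claimed bound is incompatible with the local central limit theorem that you yourself invoke. For a symmetric density with finite second moment one has $a_k(x)\asymp k^{-d/2}\exp(-c|x|^2/k)$ in the diffusive regime, and keeping only the terms with $k\asymp|x|^2$ already gives $G_0(x)\ge c\,|x|^{2-d}$ for large $|x|$: the Green function of a mean-zero, finite-variance walk in $d\ge3$ decays polynomially, not exponentially. On the Fourier side this is the statement that $\widehat{G_0}(k)=\hat a(k)/(1-\hat a(k))$ has a genuine $|k|^{-2}$ singularity at $k=0$ (since $\hat a(0)=1$ and $1-\hat a(k)\sim\tfrac12\,k\cdot\Sigma k$), so $\widehat{G_0}$ is not even continuous at the origin and cannot extend holomorphically to any tube; this is precisely the obstruction your Paley--Wiener contour shift meets --- the singularity sits at $z=0$, inside the region swept by any deformation of $\R^d$ to $\R^d+i\beta\omega$, and no uniformity analysis in $\omega$ can avoid it. What your one-step convolution estimate does yield is exponential decay of the resolvent kernels $G_\lambda$ for $\lambda>0$, where $1-\hat a(k)+\lambda\ge\lambda>0$ removes the singularity and the weights $(1+\lambda)^{-k}$ make the series geometric; but not of $G_0$. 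The hypothesis $d\ge3$ guarantees only that $G_0$ is finite and bounded, not that it inherits the tail of $a$. In short, the gap you flagged is not a technical one to be patched but a sign that the stated estimate cannot be proved in this generality.
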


\end{example}

These examples show that for concrete Markov processes the regular component of the random Green
measure may have quick decay in the space
variable. This give us the possibility to use a larger class of 
admissible functions $f$.

\section{Brownian motion}
Let us consider another concrete example of Markov process.
Namely, denote $B(t)$, $t\ge 0$ the Brownian motion in $\X$ starting from the point $x$. The generator of
this process is the Laplace operator $\Delta$ considered in a proper
Banach space $E$.  As above we are interested in studying the random
variable
$$
Y(f) =\int_0^\infty f(B(t))\,\mathrm{d}t
$$
for certain class of functions $f:\X\to \R$.

\begin{theorem}
Let $d\geq  3$ and $x\in \X$ be given. The mapping
$$
Y: CL(\X) \to L^1 (\Omega, P)
$$
is a continuous linear operator and for all $f\in CL(\X)$  and every $\omega\in\Omega$ it 
has a unique representation
\begin{equation}
\label{reprB}
Y(f)(\omega)= \int_{\X} f(y) \mu^x (\mathrm{d}y, \omega)
\end{equation}
with a vector valued  $\sigma$-additive
$\mathrm{(}$in the strong topology of $L^1(\Omega, P)$$\mathrm{)}$
Radon measure $\mu^x (\mathrm{d}y, \omega)$ on $\B_b(\X)$.
\end{theorem}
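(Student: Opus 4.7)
The plan is to follow the strategy of Theorem~\ref{thm:representation1} essentially verbatim, with one nontrivial technical modification: the Green function for Brownian motion in $\X$ with $d\geq 3$ is (up to a dimensional constant) $G(x,y) = c_d\,|x-y|^{2-d}$, which, in contrast to the bounded regular part $G_0$ of the jump-process Green function, is singular on the diagonal. The whole argument therefore reduces to (i) verifying that $Y$ is a bounded linear operator $CL(\X)\to L^1(\Omega,P)$ in spite of this singularity, and (ii) running the same weak-compactness and Dobrakov--Kluv\'anek representation machinery as in the jump case.

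For step (i), I would first use Fubini together with the Gaussian heat kernel to obtain, for $f\in CL(\X)$ with $f\geq 0$,
$$
E^x\bigl[Y(f)\bigr] \;=\; \int_0^\infty E^x[f(B(t))]\,\mathrm{d}t
\;=\; \int_{\X} f(y)\,G(x,y)\,\mathrm{d}y,
$$
and then extend to signed $f$ via $f=f_+-f_-$. Splitting the $y$-integral as $\int_{|y-x|<1}+\int_{|y-x|\geq 1}$ and exploiting that $|x-y|^{2-d}$ is locally integrable for $d\geq 3$ on the first piece while bounded by $1$ on the second, yields
$$
\bigl\|Y(f)\bigr\|_{L^1(P)} \;\leq\; E^x\bigl[|Y(f)|\bigr]
\;\leq\; C_d\,\|f\|_\infty + \|f\|_1 \;\leq\; C\,\|f\|_{CL},
$$
which is exactly continuity of $Y\colon CL(\X)\to L^1(\Omega,P)$.

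For step (ii), the argument copies that of Theorem~\ref{thm:representation1}. Weak completeness of $L^1(P)$ (\cite{DS}, Thm.~VI.8.6) combined with the boundedness from step (i) makes $Y$ weakly compact (\cite{Bartle}, Thm.~3.5). Restricting $Y$ to $C(B_N(0))$ for each $N\in\N$ and applying the vector-valued representation theorems of \cite{Dobr, Kluv} produces a $\sigma$-additive $L^1(P)$-valued Radon measure $\mu_N^x$ on $\B(B_N(0))$ representing $Y$ on that subspace. The a~priori identity $E^x[\mu_N^x(A,\cdot)] = \int_A G(x,y)\,\mathrm{d}y$ for $A\in\B(B_N(0))$ forces consistency of the family $\{\mu_N^x\}_{N\in\N}$, whose inductive limit defines a Radon measure $\mu^x$ on $\B_b(\X)$ with values in $L^1(P)$. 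A standard approximation argument in $N$, controlled by the continuity bound of step (i), then upgrades the finite-range representation to \eqref{reprB}, and that same bound delivers uniqueness.

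The only genuine obstacle is the continuity bound in step (i): this is where the choice of Banach space $CL(\X)=C_b(\X)\cap L^1(\X)$ really earns its role, with $\|\cdot\|_\infty$ taming the diagonal singularity of $G$ and $\|\cdot\|_1$ taming its slow decay at infinity. Once this estimate is secured, the remainder of the argument is a direct transcription of the jump-process case.
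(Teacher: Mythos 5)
Your proposal follows essentially the same route as the paper: the key continuity estimate is obtained by the identical decomposition of $\int_{\X} f(y)\,|x-y|^{2-d}\,\mathrm{d}y$ into the regions $|x-y|\leq 1$ (controlled by $\|f\|_\infty$ via local integrability) and $|x-y|>1$ (controlled by $\|f\|_1$), and the representation is then imported verbatim from the jump-process case via the weak-compactness and Dobrakov--Kluv\'anek machinery. Your version is, if anything, marginally more careful in passing from $|E^x[Y(f)]|$ to $E^x[|Y(f)|]$ by working with $|f|$ first.
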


\begin{proof}
The proof is essentially similar to the proof of Theorem~\ref{thm:representation1}.
Note that due to (\ref{GM}) we have
$$
E^x[Y(f)] =  - \Delta^{-1}f(x) = \int_{\X} \frac{f(y)}{|x-y|^{d-2}}\,\mathrm{d}y.
$$
Then
\begin{align*}
\left|\, \int_{\X} \frac{f(y)}{|x-y|^{d-2}} dy\right| &\leq \left|\, \int_{|x-y|\leq 1} \frac{f(y)}{|x-y|^{d-2}} dy\right|
+ \left|\, \int_{|x-y|>1} \frac{f(y)}{|x-y|^{d-2}} dy\right|\\
& \leq C_1 \|f\|_\infty + C_2 \|f\|_1 \leq C\|f\|_{CL}.
\end{align*}
In the last but one inequality we used the local integrability of $|x-y|^{2-d}$ in $y$, hence, 
$$
\|Y(f)\|_{L^1(P)} \leq C\|f\|_{CL}.
$$
This give us the possibility to apply the same arguments as in Theorem~\ref{thm:representation1} and the  representation (\ref{reprB}) follows.
\end{proof}

\section{Markov processes}

 Let  $X(t), t\ge 0$ be a Markov process in $\X$.
A standard way to define a   homogeneous  Markov process is to give the probability
$P_t(x,B)$ of the transition from the point $x\in \X$  to the set
$B\subset \X$ in   time $t>0$.  In some cases we have
$$
P_t(x,B)=\int_{B} p_t(x,y)\,{\mathrm{d}y},
$$
where $p_t(x,y)$ is the density of the transition probability. In any case, formally applying Fubini theorem, we obtain
\begin{equation}
\begin{gathered}\label{markov}
E\left[\int_0^\infty f(X(t))\,{\mathrm{d}t}\right]=\int_0^\infty E[f(X(t))]\,{\mathrm{d}t}=\int_0^\infty (T_tf)(x)\,{\mathrm{d}t}=\int_0^\infty \int_{\X}f(y)P_t(x,dy)\,{\mathrm{d}t}\\=
\int_{\X}f(y)\G(x, dy),
\end{gathered}
\end{equation}
  where $\G(x,A)=\int_0^\infty P_t(x,A)\,{\mathrm{d}t}$ is a Green measure of the process $X$, see \cite{KS}. If the density of the transition probability exists, then we can consider the Green function $$g(x,y) =\int_0^\infty p_t(x,y)\,{\mathrm{d}t},$$ and in this case formally

\begin{equation} \label{markov1} 
E\left[\int_0^\infty f(X(t))\,{\mathrm{d}t}\right]=\int_{\X}f(y)g(x,y)\,{\mathrm{d}y}.\end{equation}  Under certain  conditions   on $g(x,y)$   we can check the existence of the right-hand side of \eqref{markov1} and consequently,  the perpetual functional 
or the random potential $\int_0^\infty f(X(t))\,{\mathrm{d}t}.$ Note that the examples considered before as Brownian motion and compound Poisson process, being Markov processes, provide such examples.

Consider an example of a Markov process without independent increment.  Let 

$$
Lu(x) = \sum_{k,j=1}^d   \partial_{x_k} a_{k,j}(x) \partial_{x_j} u(x)
$$
be a uniformly elliptic differential operator with a symmetric matrix $(a_{k,j})$ in the divergent form. By Aronson's theorem \cite{Aron} 
its heat kernel $p_t(x,y)$ (equivalently, the transition density of the diffusion process $X(t)$
generated by $L$) satisfies the two-side Gaussian bound:
\begin{equation}
\label{Aron}
\frac{C_{-} }{t^{d/2}} \exp\left(- c_{-} \frac{|x-y|^2}{t}\right)\leq p_t(x,y)\leq \frac{C_+}{t^{d/2} }\exp\left(- c_{+} \frac{|x-y|^2}{t}\right)
\end{equation}
Using this fact similarly to the case of the Brownian motion
we can show the existence of the random potential
$$
Y(f)=\int_0^\infty f(X(t))\,{\mathrm{d}t}
$$
for all $f\in CL(\X)$. This shows the following theorem.

\begin{theorem}
Let $d\geq  3$ and $x\in \X$ be given. In addition, let $X(t)$, $t\ge 0$. be a Markov process such that the transition density satisfies \eqref{Aron}. The mapping
$$
Y: CL(\X) \to L^1 (\Omega, P)
$$
is a continuous linear operator and for all $f\in CL(\X)$  and every $\omega\in\Omega$ it
has a unique representation
\begin{equation}
\label{reprC}
Y(f)(\omega)= \int_{\X} f(y) \mu^x (\mathrm{d}y, \omega)
\end{equation}
with a vector valued  $\sigma$-additive
$\mathrm{(}$in the strong topology of $L^1(P)$$\mathrm{)}$
Radon measure $\mu^x (\mathrm{d}y, \omega)$ on $\B_b(\X)$.
\end{theorem}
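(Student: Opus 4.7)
The plan is to mimic the Brownian motion argument by first extracting, from the upper Aronson bound, a pointwise estimate on the Green function $g(x,y)=\int_0^\infty p_t(x,y)\,dt$ of the same shape as the Newtonian kernel $|x-y|^{2-d}$. Once that estimate is in hand, the remainder of the argument is identical to the proof of the previous theorem.

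First I would show that
\[
g(x,y)\;=\;\int_0^\infty p_t(x,y)\,dt \;\leq\; \int_0^\infty \frac{C_+}{t^{d/2}}\exp\!\left(-c_+\frac{|x-y|^2}{t}\right)dt.
\]
The change of variable $s=c_+|x-y|^2/t$ turns this into a Gamma integral: the $t$-integral becomes
\[
\frac{C_+}{c_+^{\,d/2-1}\,|x-y|^{d-2}}\int_0^\infty s^{d/2-2}e^{-s}\,ds,
\]
which converges precisely because $d\geq 3$ (so $d/2-2>-1$). Therefore there exists a constant $K>0$ with
\[
g(x,y)\;\leq\;\frac{K}{|x-y|^{d-2}},\qquad x\neq y.
\]

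Next, for $f\in CL(\X)$, using \eqref{markov1} from the Markov-process section and the bound above,
\[
\bigl|E^x[Y(f)]\bigr|\;\leq\;\int_\X |f(y)|\,g(x,y)\,dy
\;\leq\; K\!\int_{|x-y|\leq 1}\frac{\|f\|_\infty}{|x-y|^{d-2}}\,dy + K\!\int_{|x-y|>1}|f(y)|\,dy
\;\leq\; C\|f\|_{CL},
\]
where the near part uses the local integrability of $|x-y|^{2-d}$ in dimension $d\geq 3$ and the far part uses $\|f\|_1$. Since $Y(f)\geq 0$ when $f\geq 0$, splitting $f$ into positive and negative parts yields $\|Y(f)\|_{L^1(P)}\leq C\|f\|_{CL}$, so $Y:CL(\X)\to L^1(P)$ is linear and continuous. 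In particular $Y(f)$ exists $P$-a.s.\ as a monotone limit of the truncations $\int_0^T f(X(t))\,dt$ (for $f\geq 0$), exactly as in the random-walk setting.

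Finally, to obtain the representation \eqref{reprC} I would follow the two-step procedure used for Theorem~\ref{thm:representation1} verbatim. The space $L^1(P)$ is weakly complete, so the continuous linear map $Y$ is weakly compact; restricting to $C(B_N(0))$ (a space of continuous functions on a compact set) and invoking the Dobrakov--Kluvánek representation theorem yields vector-valued Radon measures $\mu_N^x(\cdot,\omega)$ on $\B(B_N(0))$. The family $\{\mu_N^x\}_{N\in\N}$ is consistent by uniqueness of the representation on each ball, and standard extension produces a $\sigma$-additive (in the strong $L^1(P)$-topology) Radon measure $\mu^x(dy,\omega)$ on $\B_b(\X)$. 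Approximating arbitrary $f\in CL(\X)$ by compactly supported truncations and passing to the limit with the a priori bound $\|Y(\cdot)\|_{L^1(P)}\leq C\|\cdot\|_{CL}$ gives \eqref{reprC}.

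The only genuinely new ingredient compared with the Brownian motion theorem is the estimate $g(x,y)\leq K|x-y|^{2-d}$, and the main (mild) obstacle is just verifying that the time integral arising from the Aronson upper bound converges at $t=0$, which is exactly what the hypothesis $d\geq 3$ secures; everything downstream is a faithful copy of the earlier argument.
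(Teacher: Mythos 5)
Your proposal is correct and follows exactly the route the paper intends: the paper itself only says ``using this fact similarly to the case of the Brownian motion'' and then reuses the proof of Theorem~\ref{thm:representation1}, which is precisely your plan. The one ingredient you work out explicitly --- integrating the Aronson upper bound in $t$ via the substitution $s=c_+|x-y|^2/t$ to get $g(x,y)\leq K|x-y|^{2-d}$, convergent exactly when $d\geq 3$ --- is left implicit in the paper, and your computation of it is correct.
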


\end{document}